\documentclass{amsart}

\usepackage{amssymb}
\usepackage{epsfig}
\usepackage{amscd}
\usepackage{amsthm}
\usepackage[noadjust]{cite} 

\newtheorem{thm}{Theorem}[section]
\newtheorem{prop}[thm]{Proposition}
\newtheorem{lem}[thm]{Lemma}

\theoremstyle{definition}
\newtheorem{definition}[thm]{Definition}

\theoremstyle{remark}
\newtheorem{remark}[thm]{Remark}

\numberwithin{equation}{section}                                                                                                                                                                                                                                                                                                                                                                                                                                                                                                                                                                                                                                                                                                                                                                                                                                                                                                                                                                                                                                                                                                                                                                                                                                                                                                                                                                                                                                                                                                                                                                                                                                                                                                                                                                                                                                                                                                                                                                                                                                                                                                                                                                                                                                                                                                                                                                                                                                                                                                                                                                                                                                                                                                                                                                                                                                                                                                                                                                                                                                                                                                                                                                                                                                                                                                                                                                                                                                                                                                                                                                                                                                                                                                                                                                                                                                                                                                                                                                                                                                                                                                                                                                                                                                                                                                                                                                                                                                                                                                                                                                                                                                                                                                                                                                                                                                                                                                                                                                                                                                                                                                                                                                                                                                                                                                                                                                                                                                                                                                                                                                                                                                                                                                                                                                                                                                                                                                                                                                                                                                                                                                                                                                                                                                                                                                                                                                                                                                                                                                                                                                                                                                                                                                                                                                                                                                                                                                                                                                                                                                                                                                                                                                                                                                                                                                                                                                                                                                                                                                                                                                                                                                                                                                                                                                                                                                                                                                                                                                                                                                                                                                                                                                                                                                                                                                                                                                                                                                                                                                                                                                                                                                                                                                                                                                                                                                                                                                                                                                                                                                                                                                                                                                                                                                                                                                                                                                                                                                                                                                                                                                                                                                                                                                                                                                                                                                                                                                                                                                                                                                                                                                                                                                                                                                                                                                                                                                                                                                                                                                                                                                                                                                                                                                                                                                                                                                                                                                                                                                                                                                                                                                                                                                                                                                                                                                                                                                                                                                                                                                                                                                                                                                                                                                                                                                                                                                                                                                                                                                                                                                                                                                                                                                                                                                                                                                                                                                                                                                                                                                                                                                                                                                                                                                                                                                                                                                                                                                                                                                                                                                                                                                                                                                                                                                                                                                                                                                                                                                                                                                                                                                                                                                                                                                                                                                                                                                                                                                                                                                                                                                                                                                                                                                                                                                                                                                                                                                                                                                                                                                                                                                                                                                                                                                                                                                                                                                                                                                                                                                                                                                                                                                                                                                                                                                                                                                                                                                                                                                                                                                                                                                                                                                                                                                                                                                                                                                                                                                                                                                                                                                                                                                                                                                                                                                                                                                                                                                                                                                                                                                                                                                                                                                                                                                                                                                                                                                                                                                                                                                                                                                                                                                                                                                                                                                                                                                                                                                                                                                                                                                                                                                                                                                                                                                                                                                                                                                                                                                                                                                                                                                                                                                                                                                                                                                                                                                                                                                                                                                                                                                                                                                                                                                                                                                                                                                                                                                                                                                                                                                                                                                                                                                                                                                                                                                                                                                                                                                                                                                                                                                                                                                                                                                                                                                                                                                                                                                                                                                                                                                                                                                                                                                                                                                                                                                                                                                                                                                                                                                                                                                                                                                                                                                                                                                                                                                                                                                                                                                                                                                                                                                                                                                                                                                                                                                                                                                                                                                                                                                                                                                                                                                                                                                                                                                                                                                                                                                                                                                                                                                                                                                                                                                                                                                                                                                                                                                                                                                                                                                                                                                                                                                                                                                                                                                                                                                                                                                                                                                                                                                                                                                                                                                                                                                                                                                                                                                                                                                                                                                                                                                                                                                                                                                                                                                                                                                                                                                                                                                                                                                                                                                                                                                                                                                                                                                                                                                                                                                                                                                                                                                                                                                                                                                                                                                                                                                                                                                                                                                                                                                                                                                                                                                                                                                                                                                                                                                                                                                                                                                                                                                                                                                                                                                                                                                                                                                                                                                                                                                                                                                                                                                                                                                                                                                                                                                                                                                                                                                                                                                                                                                                                                                                                                                                                                                                                                                                                                                                                                                                                                                                                                                                                                                                                                                                                                                                                                                                                                                                   
\setcounter{section}{-1}


\begin{document}
\title{The Euler characteristics of generalized Kummer schemes}
\author{Junliang Shen}

\address{Departement Mathematik, ETH Z\"urich}
\email{junliang.shen@math.ethz.ch}
 \begin{abstract} We compute the Euler characteristics of the generalized Kummer schemes associated to $A\times Y$, where $A$ is an abelian variety and $Y$ is a smooth quasi-projective variety. When $Y$ is a point, our results prove a formula conjectured by Gulbrandsen.
\end{abstract} 

\maketitle

\section{Introduction}
We work over the complex numbers $\mathbb{C}$. Let $X$ be the smooth variety $A\times Y$ where $Y$ is an $r$-dimensional smooth quasi-projective variety and $A$ is a $g$-dimensional abelian variety. There is a natural morphism from the Hilbert scheme of points on $X$ to the abelian variety
\begin{equation}
\pi_n: \mathrm{Hilb}^n(X) \rightarrow A
\end{equation}
which is the composition of the map $ \mathrm{Hilb}^n(X) \rightarrow \mathrm{Sym}^n(A)$ and the addition map $\mathrm{Sym}^n(A) \rightarrow A$.\footnote[1]{Since we mainly concern Euler characteristics, we can always work with the reduced scheme structure in this paper.} It is clear that $\pi_n$ gives an isotrivial fibration which shows that the Euler characteristic of $\mathrm{Hilb}^{n}(X)$ is 0. The fiber over the origin  $O_A \in A$ is defined to be
\[
K_n(X) := \pi_n^{-1}(O_A).
\]
We call the construction above the \textit{generalized Kummer construction} of $X=A \times Y$. When $A$ is an abelian surface and $Y$ is a point, this construction gives a group of interesting examples of holomorphic symplectic varieties, which are the generalized Kummer varieties introduced by Beauville \cite{B}. However, it should be mentioned that the scheme $K_n(X)$ is always singular when $\textup{dim}(X)>2$ and $n>3$.

The Euler characteristics of $K_n(X)$ are completely determined by the following formula.
\begin{thm}\label{mainthm}
With the notation as above, we have
\begin{equation*}
\mathrm{exp}\Big{(} \sum_{n \geq 1} \frac{\chi{(K_n(X))}}{n^{2g}} \cdot t^n  \Big{)} = \Big{(} \sum_{k\geq 0} P_{r+g}(k)\cdot t^k\Big{)}^{\chi(Y)}.
\end{equation*}
Here $g \geq 1$ and $P_m(k)$ is the number of $m$-dimensional partitions of $k$. 
\end{thm}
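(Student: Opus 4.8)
\noindent\emph{Proof idea.} The plan is to reduce, through the Hilbert--Chow morphism, to an Euler characteristic computation on symmetric products of $A$, where the vanishing of $\chi$ on positive-dimensional abelian varieties collapses almost every contribution, and then to recognize the surviving combinatorial sum as the Taylor expansion of a logarithm.

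First I would use that for complex algebraic varieties the topological Euler characteristic agrees with the compactly supported one, so that $\chi$ is additive on locally closed decompositions and multiplicative along fibrations. The morphism $\pi_n$ factors through the Hilbert--Chow morphism $h\colon\mathrm{Hilb}^n(X)\to\mathrm{Sym}^n(X)$, and $K_n(X)=h^{-1}(Z_n)$, where $Z_n\subseteq\mathrm{Sym}^n(X)$ is the closed set of $0$-cycles $\sum_i m_i[x_i]$ on $X=A\times Y$ with $\sum_i m_i\,\mathrm{pr}_A(x_i)=O_A$. Decomposing $Z_n$ into the locally closed strata $S^\circ_\tau$ of cycles of a fixed multiplicity type $\tau$ --- say $\mu_m$ points of multiplicity $m$, so $\sum_m m\mu_m=n$ and there are $k=\sum_m\mu_m$ points altogether --- the restriction of $h$ over each such stratum has all fibres isomorphic to the product of punctual Hilbert schemes $\prod_i\mathrm{Hilb}^{m_i}_0(\mathbb{A}^{r+g})$, because $X$ is smooth of dimension $r+g$. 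Hence
\[
\chi(K_n(X))=\sum_\tau\chi(S^\circ_\tau)\cdot\prod_i\chi\big(\mathrm{Hilb}^{m_i}_0(\mathbb{A}^{r+g})\big),
\]
and a torus localization on the projective scheme $\mathrm{Hilb}^m_0(\mathbb{A}^d)$ --- whose $(\mathbb{C}^*)^d$-fixed points are the monomial ideals of colength $m$, that is, the $d$-dimensional partitions of $m$ --- gives $\chi(\mathrm{Hilb}^m_0(\mathbb{A}^d))=P_d(m)$.

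The crux is the evaluation of $\chi(S^\circ_\tau)$. The stratum $S^\circ_\tau$ is the quotient, by the free action of $\prod_m S_{\mu_m}$, of the space $W^\circ_\tau$ of ordered configurations of $k$ distinct points $(a_i,y_i)\in A\times Y$ with $\sum_i m_i a_i=O_A$, so $\chi(S^\circ_\tau)=\tfrac{1}{\prod_m\mu_m!}\,\chi(W^\circ_\tau)$. I would then apply inclusion--exclusion over the set partitions $P$ of $\{1,\dots,k\}$, i.e.\ M\"obius inversion in the partition lattice $\Pi_k$: $\chi(W^\circ_\tau)=\sum_P\mu_{\Pi_k}(\hat 0,P)\,\chi(\Delta_P)$, where $\Delta_P$ is the closed locus on which the configuration is constant on the blocks of $P$ and still satisfies the constraint. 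Collapsing a block $B$ to a single point turns the constraint into one linear equation, so $\Delta_P\cong\ker\big(A^{|P|}\to A\big)\times Y^{|P|}$; here $A^{|P|}\to A$ is surjective because $A$ is divisible, so its kernel is an extension of a finite group by an abelian variety of dimension $(|P|-1)g$. This is exactly where the hypothesis $g\ge 1$ is used: for $|P|\ge 2$ this abelian variety is positive-dimensional, hence $\chi(\Delta_P)=0$, and only the one-block partition $\hat 1$ survives, for which the kernel is the $n$-torsion subgroup $A[n]$, of order $n^{2g}$. Since $\mu_{\Pi_k}(\hat 0,\hat 1)=(-1)^{k-1}(k-1)!$, this yields
\[
\chi(S^\circ_\tau)=\frac{(-1)^{k-1}(k-1)!}{\prod_m\mu_m!}\,n^{2g}\,\chi(Y).
\]

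Substituting back,
\[
\chi(K_n(X))=n^{2g}\,\chi(Y)\sum_{(\mu_m):\,\sum_m m\mu_m=n}\frac{(-1)^{k-1}(k-1)!}{\prod_m\mu_m!}\prod_m P_{r+g}(m)^{\mu_m},\qquad k=\textstyle\sum_m\mu_m,
\]
and the sum on the right is exactly the coefficient of $t^n$ in $\log\big(\sum_{j\ge 0}P_{r+g}(j)\,t^j\big)$, by the multinomial expansion $\log\big(1+\sum_{m\ge 1}b_m t^m\big)=\sum_{k\ge 1}\frac{(-1)^{k-1}}{k}\big(\sum_{m\ge 1}b_m t^m\big)^k$ applied with $b_m=P_{r+g}(m)$. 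Therefore $\sum_{n\ge 1}\frac{\chi(K_n(X))}{n^{2g}}\,t^n=\chi(Y)\log\big(\sum_{j\ge 0}P_{r+g}(j)\,t^j\big)$, and exponentiating yields the theorem; when $Y$ is a point this is Gulbrandsen's conjectured formula. I expect the real work to be in the evaluation of $\chi(S^\circ_\tau)$: one must make the Hilbert--Chow stratification and the configuration-space quotient genuinely motivic (additivity of $\chi$ over the strata, multiplicativity of the punctual fibrations, and exactness of division by $\prod_m\mu_m!$ for the free action), and then push the partition-lattice inclusion--exclusion through carefully enough to confirm that every configuration with two or more distinct surviving points is annihilated by the vanishing $\chi(\text{positive-dimensional abelian variety})=0$ --- a vanishing with no counterpart over a general base, which is precisely what makes the final answer so clean.
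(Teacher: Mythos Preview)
Your argument is correct and takes a genuinely different route from the paper's. The paper proceeds in two stages: it first observes (via the underlying real torus structure) that $\chi(K_n(A\times Y))$ is independent of the choice of $g$-dimensional abelian variety, reduces to $A=E\times B$ with $E$ an elliptic curve, and then projects $K_n(X)\to\mathrm{Sym}^n(B\times Y)$; after a vanishing argument on the $B$-factor this leaves an elliptic-curve quantity $\chi(W^n_{g+r-1})$, which is handled by Gulbrandsen's recursion $\chi(E^{(n)}_{\alpha,0})=e(\alpha)$ together with a separately proved combinatorial equivalence between that recursion and the exponential identity. Your approach bypasses both the reduction to a split abelian variety and the $e(\alpha)$ recursion: you stratify $\mathrm{Sym}^n(X)$ (not $\mathrm{Sym}^n(A)$) by multiplicity type, evaluate $\chi(S^\circ_\tau)$ by M\"obius inversion on the partition lattice, and use the single vanishing $\chi(\text{positive-dimensional abelian variety})=0$ to kill every set partition except the one-block $\hat 1$, which contributes $(-1)^{k-1}(k-1)!\,n^{2g}\chi(Y)$; the resulting sum is then recognized directly as the $t^n$-coefficient of $\log\sum_k P_{r+g}(k)t^k$. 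What the paper's route buys is a close link to Gulbrandsen's existing framework and an explicit intermediate invariant on the elliptic factor; what yours buys is a uniform one-shot proof valid for arbitrary $A$, with the logarithmic structure emerging transparently from the M\"obius function on $\Pi_k$.
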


Note that partition counts have been previously related to the Euler characteristics of Hilbert schemes in \cite{cheah} and to Donaldson--Thomas invariants in \cite{MNOP}.

When $Y$ is a point, the theorem shows that
\begin{equation*}
\mathrm{exp}\Big{(} \sum_{n \geq 1} \frac{\chi{(K_n(A))}}{n^{2g}} \cdot t^n  \Big{)} =  
 \sum_{k\geq 0} P_{g}(k)\cdot t^k.
\end{equation*}
This formula was conjectured by Gulbrandsen \cite{Gul2}. In particular, the case of $g=2$ gives the formula for the Euler characteristics of generalized Kummer varieties
\begin{equation*}
\chi (K_n(A)) = n^3 \sum_{d|n} d,
\end{equation*}
which has been proven by G\"ottsche \cite{Lecture}, G\"ottsche and Soergel \cite{GS}, Debarre \cite{D} and Gulbrandsen \cite{Gul1} in different ways. 

When $g=1$ and $r=1$, the variety $K_n(X)$ is also smooth. G\"ottsche computed the Euler characteristic $\chi(K_n(X))$ when $Y = \mathbb{P}^1$ in \cite{Lecture} Chapter 2.4 by using the Weil conjecture. Our method extends G\"ottsche's results to any fibration $X \rightarrow E$ where $E$ is an elliptic curve and the fiber is a smooth curve $Y$ (see Remark \ref{map}),
\[
\chi (K_n(X)) = \chi(Y)\cdot n \sum_{d|n} d.
\]

When $\textup{dim}(X) = 3$, by the MacMahon's product formula for the generating series of 3-dimensional partitions \cite{Com}, we can get the following interesting formula, \footnote[2]{After my talk on the results of this paper at the workshop ``Motivic invariants related to K3 and abelian geometries" in Berlin, I was informed by Andrea Ricolfi that he obtained the formula when $Y$ is a point and $A$ is an abelian 3-fold independently.} 
\[
\chi(K_n(A\times Y)) = \chi(Y) \cdot n^{2g-1}\sum_{d|n}d^2. 
 \]
By \cite{Gul2}, this formula computes the Donaldson--Thomas invariants in degree zero for abelian 3-folds,
\[
\textup{DT}_{n,0}(A) = \frac{(-1)^{n-1}}{n^6}\chi(K_n(A)) = \frac{(-1)^{n-1}}{n}\sum_{d|n}d^2. \]

The full Donaldson--Thomas theory of curves on abelian 3-folds is discussed in \cite{BOPY}. Invariants in primitive classes are conjectured explicitly in terms of Jacobi forms.

 The motivic theory of the generalized Kummer schemes has recently been established in a joint work with Morrison \cite{MS}, which gives a motivic refinement of the main theorem.\\
 \\
Our proof of the theorem follows 2 steps:\\
{\bf Step 1.} Use cut-and-paste to show that the Euler characteristic of $K_n(A\times Y)$ does not depend on the choice of the $g$-dimensional abelian variety $A$.\\
{\bf Step 2.} Generalize the method in \cite{Gul1} to prove the theorem when $A = E \times B$ where $E$ is an elliptic curve and $B$ is a $(g-1)$-dimensional abelian variety.

\subsection*{Acknowledgement} I would like to thank my advisor Rahul Pandharipande for his support, encouragement and helpful conversations. Thanks also to Andrew Morrison, Georg Oberdieck, and Qizheng Yin for related discussions, to Jim Bryan for an inspiring talk in the moduli seminar at ETH Z\"urich, and to
the referee for comments and suggestions.

This work was carried out in the group of Pandharipande at ETH Z\"urich, supported by grant ERC-2012-AdG-320368-MCSK .

\section{The geometry of the generalized Kummer construction}
Let $\rho: \mathrm{Hilb}^n (X) \rightarrow \mathrm{Sym}^n(A) $ be the composition of the Hilbert--Chow morphism $\mathrm{Hilb}^n(X) \rightarrow \mathrm{Sym}^n(X)$ and the projection $\mathrm{Sym}^n(X)\rightarrow \mathrm{Sym}^n(A)$, and $f: \mathrm{Sym}^n(A) \rightarrow A$ be the addition map.
Thus we have $\pi_n = f \circ \rho$. There is a standard stratification
\begin{equation*}
\mathrm{Sym}^n(A) = \coprod_{\alpha} A^{(n)}_{\alpha}.
\end{equation*}
Here $\alpha$ runs through all (2-dimensional) partitions of $n$, and if we write $\alpha$ to be the partition $n=n_1+n_2+\cdots+n_l$, the corresponding stratum is
\begin{equation*}
A^{(n)}_{\alpha}=\Big{\{} \sum_{i=1}^{l}n_i[a_i] \in \mathrm{Sym}^n(A) \,    \Big{|} \, a_i\in A, a_i \neq a_j \textup{ for } i\neq j   \Big{\}}.
\end{equation*}

We study the morphism $\rho$ over each $A^{(n)}_{\alpha}$. First we introduce the following definition.
\begin{definition}
For a smooth quasi-projective variety $M$, we denote by $\mathrm{Hilb}^{n}(M\times \mathbb{C}^m)_{\{0\}}$ the subscheme of $\mathrm{Hilb}^{n}(M\times \mathbb{C}^m)$ consisting of subschemes supported on $M\times \{ 0 \} $, \textit{i.e.}
\[
\mathrm{Hilb}^{n}(M\times \mathbb{C}^m)_{\{0\}} = \Big{\{} \xi \in \mathrm{Hilb}^{n}(M\times \mathbb{C}^m) \,\Big{|}\, \mathrm{Supp}(\xi) \subset M \times \{0\} \Big{\}}.
\]
\end{definition}
\begin{remark}
\begin{enumerate}
\item[1.] When $M$ is a point, the above definition exactly gives the punctual Hilbert scheme. 
\item[2.] For an $m$-dimensional smooth projective variety $M'$ and a point $p \in M'$, we can also define similarly the scheme 
\[\mathrm{Hilb}^{n}(M\times M')_{\{p\}}:= \Big{\{} \xi \in \mathrm{Hilb}^{n}(M\times M') \, \Big{|} \, \mathrm{Supp}(\xi) \subset M \times \{p\} \Big{\}}.
\]
It is easy to see that it does not depend on the choice of $M'$ and the point $p$. We have  $\mathrm{Hilb}^{n}(M\times M')_{\{p\}} \cong \mathrm{Hilb}^{n}(M\times \mathbb{C}^m)_{\{0\}}$.
\end{enumerate}
\end{remark}

Now we fix a partition $\alpha = (n_1, n_2, \dots,n_l)$ of $n$. The morphism \[\rho|_{A^{(n)}_\alpha}: \rho^{-1}(A^{(n)}_{\alpha}) \rightarrow A^{(n)}_{\alpha}\] is a fibration with fiber 
\begin{equation}\label{fiber}
F_{\alpha} \cong \mathrm{Hilb}^{n_1}(Y\times \mathbb{C}^g)_{\{0\}} \times \cdots \times \mathrm{Hilb}^{n_l}(Y\times \mathbb{C}^g)_{\{0\}}.
\end{equation}  
Hence we obtain the following lemma.
\begin{lem}\label{euler}
The Euler characteristic of $K_n(X)$ can be expressed as follows
\begin{equation}\label{euler2}
\chi(K_n(X)) = \sum_{\alpha} \chi(A^{(n)}_{\alpha , 0}) \cdot \chi(F_\alpha),
\end{equation}
where $\alpha$ runs through all partitions of $n$ and $A^{(n)}_{\alpha,0}=f^{-1}(O_A)\bigcap A^{(n)}_{\alpha}$.
\end{lem}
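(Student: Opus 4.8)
The plan is to deduce \eqref{euler2} from two standard properties of the topological Euler characteristic $\chi$ of complex algebraic varieties, which for such varieties agrees with the compactly supported Euler characteristic: it is additive over finite decompositions into locally closed subvarieties, and it is multiplicative along locally trivial fibrations.

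First I would stratify $K_n(X)$. Since $\pi_n=f\circ\rho$, we have $K_n(X)=\pi_n^{-1}(O_A)=\rho^{-1}\big(f^{-1}(O_A)\big)$; intersecting the standard stratification $\mathrm{Sym}^n(A)=\coprod_\alpha A^{(n)}_\alpha$ with $f^{-1}(O_A)$ and pulling back by $\rho$ yields the decomposition into locally closed pieces
\[
K_n(X)=\coprod_\alpha \rho^{-1}\big(A^{(n)}_{\alpha,0}\big),\qquad A^{(n)}_{\alpha,0}=f^{-1}(O_A)\cap A^{(n)}_\alpha .
\]
Additivity of $\chi$ then gives $\chi(K_n(X))=\sum_\alpha \chi\big(\rho^{-1}(A^{(n)}_{\alpha,0})\big)$, so it remains to evaluate each summand.

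Next, fix a partition $\alpha=(n_1,\dots,n_l)$ and invoke the fibration structure of $\rho$ over the stratum $A^{(n)}_\alpha$ recalled just before the lemma, whose fiber is $F_\alpha$ as in \eqref{fiber}. Restricting this fibration to the closed subvariety $A^{(n)}_{\alpha,0}\subset A^{(n)}_\alpha$ produces a fibration $\rho^{-1}(A^{(n)}_{\alpha,0})\to A^{(n)}_{\alpha,0}$ again with fiber $F_\alpha$. As every fiber is isomorphic to the fixed variety $F_\alpha$, multiplicativity of $\chi$ gives $\chi\big(\rho^{-1}(A^{(n)}_{\alpha,0})\big)=\chi\big(A^{(n)}_{\alpha,0}\big)\cdot\chi(F_\alpha)$, and summing over $\alpha$ produces \eqref{euler2}.

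The argument is essentially formal, and the one point deserving care is the multiplicativity used in the previous step: one must know that $\rho$ over $A^{(n)}_\alpha$ is locally trivial in a sense strong enough for the product formula $\chi(E)=\chi(B)\chi(F)$, not merely that it has a constant fiber. This holds because near an unordered configuration of $l$ distinct points the Hilbert scheme of $X=A\times Y$ splits as a product of the $l$ punctual-type pieces, each of which forms a family over $A$ that is globally trivialized by the translations of $A$, its fiber over any point being the single scheme $\mathrm{Hilb}^{n_i}(Y\times\mathbb{C}^g)_{\{0\}}$; thus over the ordered configuration space $\{(a_1,\dots,a_l)\in A^l : a_i\neq a_j\text{ for }i\neq j\}$ the morphism $\rho$ is a trivial $F_\alpha$-bundle, and $A^{(n)}_\alpha$ is the quotient of this space by a free action of a finite group, whence $\chi(\rho^{-1}(A^{(n)}_\alpha))=\chi(A^{(n)}_\alpha)\chi(F_\alpha)$ — and likewise over $A^{(n)}_{\alpha,0}$. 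Alternatively one can avoid local triviality altogether by stratifying $A^{(n)}_{\alpha,0}$ until $\rho$ becomes Zariski-locally trivial over each stratum and applying additivity once more; everything else is bookkeeping.
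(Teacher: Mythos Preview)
Your proof is correct and follows exactly the route the paper indicates: stratify $K_n(X)$ via the pullback of the standard stratification of $\mathrm{Sym}^n(A)$, then use additivity of $\chi$ together with the fact (stated immediately before the lemma) that $\rho$ is a fibration with fiber $F_\alpha$ over each stratum. The paper itself gives no separate proof beyond the sentence ``Hence we obtain the following lemma,'' so your write-up simply supplies the details---including the justification of local triviality needed for multiplicativity---that the paper leaves implicit.
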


\section{Step 1}
\begin{prop}\label{prop1}
Let $A$ and $A'$ be two $g$-dimensional abelian varieties and $Y$ be an $r$-dimensional smooth quasi-projective variety , we have
\[
\chi(K_n(A\times Y)) = \chi( K_n(A'\times Y)).
\]
\end{prop}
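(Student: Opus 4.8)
The plan is to feed Lemma~\ref{euler} into a cut-and-paste computation and to exploit the fact that a positive-dimensional abelian variety has vanishing Euler characteristic. By \eqref{euler2} we have $\chi(K_n(A\times Y)) = \sum_\alpha \chi(A^{(n)}_{\alpha,0})\cdot\chi(F_\alpha)$, the sum running over partitions $\alpha$ of $n$. By the isomorphism \eqref{fiber} together with the Remark above, each factor $\mathrm{Hilb}^{n_i}(Y\times\mathbb{C}^g)_{\{0\}}$, and hence the fiber $F_\alpha$, depends only on $Y$, on $g$, and on $\alpha$ — not on $A$. So it suffices to prove that $\chi(A^{(n)}_{\alpha,0})$ is independent of the $g$-dimensional abelian variety $A$ for each $\alpha$.

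Fix $\alpha=(n_1,\dots,n_l)$ and let $m_j=\#\{i:n_i=j\}$. First I would lift to the ordered configuration
\[
C_\alpha=\Big\{(a_1,\dots,a_l)\in A^l \,\Big|\, a_i\neq a_j\ (i\neq j),\ \sum_i n_i a_i=O_A\Big\}.
\]
The group $G_\alpha=\prod_j\mathfrak{S}_{m_j}$ permuting equal parts acts on $C_\alpha$, the action is free since the $a_i$ are pairwise distinct, and the quotient is $A^{(n)}_{\alpha,0}$; as Euler characteristic is multiplicative in finite étale covers, $\chi(A^{(n)}_{\alpha,0})=\chi(C_\alpha)/|G_\alpha|$ with $|G_\alpha|$ independent of $A$. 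To compute $\chi(C_\alpha)$ I would stratify $A^l=\coprod_P\Delta_P^{\circ}$ over the set partitions $P$ of $\{1,\dots,l\}$, where $\Delta_P^{\circ}$ is the locally closed locus on which $a_i=a_j$ exactly when $i,j$ lie in a common block of $P$; the finest partition gives $C_\alpha=\phi^{-1}(O_A)\cap\Delta^{\circ}_{\hat 0}$ for $\phi\colon A^l\to A$, $(a_i)\mapsto\sum_i n_i a_i$. The closed diagonal $\Delta_P\cong A^{|P|}$ is the union of the $\Delta^{\circ}_Q$ with $Q$ coarser than $P$, and $\phi$ restricts on it to the homomorphism $\phi_P\colon A^{|P|}\to A$, $(b_B)_{B\in P}\mapsto\sum_{B}c_B b_B$ with $c_B=\sum_{i\in B}n_i>0$. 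Each $\phi_P$ is surjective, because already multiplication by $c_B$ is surjective on $A$; hence $\phi_P^{-1}(O_A)$ is a finite disjoint union of translates of the abelian variety $(\ker\phi_P)^{\circ}$, of dimension $g(|P|-1)$. When $|P|\geq 2$ this dimension is positive, so each translate is a positive-dimensional complex torus and $\chi(\phi_P^{-1}(O_A))=0$; when $|P|=1$ we have $\phi_P$ equal to multiplication by $n$, so $\phi_P^{-1}(O_A)=A[n]$ has cardinality $n^{2g}$. Using additivity of $\chi$ over the stratification and Möbius inversion in the partition lattice,
\[
\chi(C_\alpha)=\sum_Q\mu(\hat 0,Q)\,\chi\bigl(\phi_Q^{-1}(O_A)\bigr)=\mu(\hat 0,\hat 1)\cdot n^{2g}=(-1)^{l-1}(l-1)!\,n^{2g},
\]
which is manifestly independent of $A$; dividing by $|G_\alpha|$ finishes the argument.

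The only real inputs are the freeness of the $G_\alpha$-action (immediate), the surjectivity of the homomorphisms $\phi_P$, and the vanishing $\chi(\phi_P^{-1}(O_A))=0$ for $|P|\geq 2$, which rests solely on the elementary fact that a compact complex torus of positive dimension has Euler characteristic $0$ — this is where the hypothesis $g\geq 1$ is used. The combinatorics of the partition lattice is routine, and in fact one does not even need the closed form of $\mu(\hat 0,\hat 1)$: it suffices that $\chi(C_\alpha)$ is a $\mathbf{Z}$-linear combination, with coefficients independent of $A$, of the numbers $\chi(\phi_Q^{-1}(O_A))\in\{0,\,n^{2g}\}$, each of which is itself independent of $A$. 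The one place requiring a little care is checking that the isomorphism \eqref{fiber}, and hence the constancy of $\chi(F_\alpha)$, is genuinely $A$-independent, which is exactly the content of the second part of the Remark.
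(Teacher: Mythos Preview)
Your argument is correct, but it is not the paper's. The paper observes that any two $g$-dimensional abelian varieties are homeomorphic as topological groups (both are $(S^1)^{2g}$ with the standard addition), and such a homeomorphism $\phi\colon A\to A'$ induces homeomorphisms $A^{(n)}_{\alpha,0}\cong A'^{(n)}_{\alpha,0}$ stratum by stratum; together with Lemma~\ref{euler} and the $A$-independence of $F_\alpha$ this finishes the proof in two lines. Your route instead \emph{computes} $\chi(A^{(n)}_{\alpha,0})$ outright via M\"obius inversion on the partition lattice, using that $\chi$ vanishes on positive-dimensional tori. The paper's approach is softer and shorter; yours is harder but yields strictly more, namely the closed form
\[
\chi\bigl(A^{(n)}_{\alpha,0}\bigr)=\frac{(-1)^{l-1}(l-1)!}{\prod_j m_j!}\,n^{2g},
\]
which for $g=1$ gives an explicit formula for the numbers $e(\alpha)$ that the paper only defines recursively in Section~3 and identifies with $\chi(E^{(n)}_{\alpha,0})$ via Lemma~\ref{e}. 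So your argument could in fact replace both Proposition~\ref{prop1} and Lemma~\ref{e} at once.
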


\begin{proof}
Since the underlying topological spaces of $A$ and $A'$ are $2g$-dimensional tori. We can construct a homeomorphism
\[
\phi: A \rightarrow A'
\] which preserves the group law, \textit{i.e}.
\[
\phi(a+_A b) = \phi(a)+_{A'} \phi(b). 
\]
The map $\phi$ induces a homeomorphism between $\mathrm{Sym}^{n}(A)$ and $\mathrm{Sym}^{n}(A')$, and homeomorphisms between the corresponding strata $A^{(n)}_{\alpha}$ and $A'^{(n)}_{\alpha}$ as well. Moreover, since $\phi$ preserves the group law, it also induces homeomorphisms between $A^{(n)}_{\alpha,0}$ and $A'^{(n)}_{\alpha, 0}$. Therefore we have
\begin{equation}\label{strata}
\chi(A^{(n)}_{\alpha,0}) = \chi(A'^{(n)}_{\alpha, 0}).
\end{equation}
The proposition is a consequence of Lemma \ref{euler}, (\ref{strata}) and the fact that $F_\alpha$ does not depend on the choice of the abelian variety (see (\ref{fiber})).
\end{proof}

\begin{remark}\label{map}
If $X \rightarrow A$ is a fibration with smooth fiber $Y$, we can also define the generalized Kummer construction associated to $X \rightarrow A$ by the same way:
\[
\textup{Hilb}^n(X) \rightarrow \textup{Sym}^n(X) \rightarrow  \textup{Sym}^n(A) \rightarrow A. \]
As a consequence of Lemma \ref{euler}, we have
\[
\chi(K_n(X)) = \chi(K_n(A \times Y))
\]
since both Euler characteristics are equal to the right hand side of (\ref{euler2}).
\end{remark}

\section{Combinatoric relations}
In this section, we generalize the combinatorics used in \cite{Gul1}. A partition $\alpha$ of $n$ may be written as $\alpha=(1^{\alpha_1}2^{\alpha_2}\cdots n^{\alpha_n})$ indicating the number of times each positive integer occurs in $\alpha$. 
\begin{definition}
We define a real number  $e(\alpha)$ for each partition $\alpha$ of $n$ by the following recursion:
\begin{enumerate}
\item[(1).] $e(n^1) = n^2$.
\item[(2).] If $\alpha =(1^{\alpha_1}2^{\alpha_2}\cdots n^{\alpha_n}) \neq (n^1)$, then
\[
e(\alpha) = - \sum_{i} \frac{n}{n-i}e(1^{\alpha_1}\dots i^{\alpha_i-1}\dots).
\]
\end{enumerate}
\end{definition}

\begin{prop}\label{prop2}
Let $\{a_n\}_{n\geq 1}$ and $\{b_n\}_{n\geq 1}$ be two sequences of variables. For convenience we also assume $b_0 =1$. The following two conditions are equivalent.
\begin{enumerate}
\item[(1).]The variables $\{a_n\}$ and $\{b_n\}$ satisfy the relations:
\[
a_n = \sum_{\alpha}e(\alpha) \cdot b_1^{\alpha_1}b_2^{\alpha_2}\cdots b_n^{\alpha_n},
\]
where $\alpha = (1^{\alpha_1}2^{\alpha_2}\cdots n^{\alpha_n})$ runs through all partitions of $n$.
\item[(2).]The variables $\{a_n\}$ and $\{b_n\}$ satisfy the equation:
\begin{equation}\label{relation}
\mathrm{exp}\Big{(}  \sum_{n \geq 1} \frac{a_n}{n^2}\cdot t^n  \Big{)} = \sum_{k\geq 0} b_kt^k.
\end{equation}
\end{enumerate}
\end{prop}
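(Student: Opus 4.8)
The plan is to show that each of conditions (1) and (2) is equivalent to the single recursion
\begin{equation*}
n b_n = \sum_{m=1}^{n} \frac{a_m}{m}\, b_{n-m}, \qquad n \geq 1, \tag{$\ast$}
\end{equation*}
and then to combine the two equivalences. Throughout I would work formally, treating $b_1, b_2, \dots$ as indeterminates (with $b_0 = 1$), so that $(\ast)$ and each of (1), (2) determine $\{a_n\}$ in terms of $\{b_n\}$; the claimed equivalence then follows by specialization.

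First I would handle the equivalence of (2) with $(\ast)$ by logarithmic differentiation. Since $b_0 = 1$, the series $B(t) := \sum_{k \geq 0} b_k t^k$ is invertible, $\log B(t)$ is a well-defined power series with zero constant term, and so is $\sum_{n \geq 1} \frac{a_n}{n^2} t^n$. Hence (2) holds if and only if these two series have equal derivatives, i.e. $\bigl( \sum_{n \geq 1} \frac{a_n}{n}\, t^{n-1} \bigr) B(t) = B'(t)$; extracting the coefficient of $t^{n-1}$ turns this into exactly $(\ast)$.

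Next I would note that $(\ast)$ determines $\{a_n\}$ uniquely: isolating the $m = n$ term (recall $b_0 = 1$) rewrites it as $a_n = n^2 b_n - n \sum_{m=1}^{n-1} \frac{a_m}{m}\, b_{n-m}$, which is triangular in $n$. So it remains only to check that the quantities $a_n := \sum_{\alpha} e(\alpha) \prod_i b_i^{\alpha_i}$ produced by (1) (the sum over partitions $\alpha$ of $n$) satisfy $(\ast)$. Substituting this expression into the right-hand side of $(\ast)$ and viewing the result as a polynomial identity in the $b_i$, I would compare coefficients of a fixed monomial $\prod_i b_i^{\alpha_i}$ for each partition $\alpha = (1^{\alpha_1} \cdots n^{\alpha_n})$ of $n$. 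For $\alpha = (n^1)$ only the pair $(m, \beta) = (n, (n^1))$ contributes, giving $\frac1n e(n^1) = n$, which matches the coefficient of $b_n$ in $n b_n$. For $\alpha \neq (n^1)$ the monomial $\prod_i b_i^{\alpha_i}$ is produced on the right by the pair $(m, \beta) = (n, \alpha)$ and, for each part $i$ of $\alpha$, by the pair $\bigl(m, \beta\bigr) = \bigl(n - i,\ (1^{\alpha_1} \cdots i^{\alpha_i - 1} \cdots)\bigr)$; summing the contributions gives
\begin{equation*}
\frac1n e(\alpha) + \sum_{i \,:\, \alpha_i \geq 1} \frac{1}{n-i}\, e(1^{\alpha_1} \cdots i^{\alpha_i - 1} \cdots),
\end{equation*}
which vanishes precisely by the defining recursion for $e(\alpha)$, matching the coefficient $0$ of $\prod_i b_i^{\alpha_i}$ in $n b_n$. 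This proves $(\ast)$ for the $a_n$ of (1), and hence $(1) \Leftrightarrow (\ast) \Leftrightarrow (2)$.

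I expect the only delicate point to be this last coefficient comparison: one must correctly enumerate the ways the monomial $\prod_i b_i^{\alpha_i}$ can arise from $\frac1m \bigl( \sum_{\beta} e(\beta) \prod_i b_i^{\beta_i} \bigr) b_{n-m}$ — equivalently, which partitions $\beta$ of $m$ become $\alpha$ after adjoining a single part of size $n - m$ — and then check that the weights $\frac1m$ dictated by $(\ast)$ reassemble, after clearing the overall factor $\frac1n$, into exactly the $\frac{n}{n-i}$-weighted sum in the definition of $e(\alpha)$. Everything else is routine manipulation of formal power series and triangular recursions.
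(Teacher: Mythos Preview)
Your proposal is correct and follows essentially the same route as the paper: both arguments apply $t\,d/dt$ (equivalently, logarithmic differentiation) to the exponential identity to obtain the recursion $a_n = n\bigl(n b_n - \sum_{m=1}^{n-1}\frac{a_m}{m}b_{n-m}\bigr)$, observe that this determines $\{a_n\}$ uniquely from $\{b_n\}$, and then match it against the defining recursion for $e(\alpha)$. The paper phrases the last step as ``by induction and the recursion of $e(\alpha)$'' without writing out the monomial-by-monomial comparison, whereas you spell that comparison out explicitly; the underlying computation is the same.
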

\begin{proof}
Since the variables $a_n$ are uniquely determined by $b_n$ in both condition (1) and (2), it suffices to prove the following statement.

($\bullet$) Assume $\{a_n\}$ satisfy (\ref{relation}), then they also satisfy the relations in (1).

By applying the operator $t\frac{d}{dt}$ on both sides of (\ref{relation}), we have
\[
\Big{(}  \sum_{n\geq 1} \frac{a_n}{n}\cdot t^n  \Big{)}\Big{(}  \sum_{m\geq 0}b_mt^m  \Big{)} =  \sum_{k \geq 0} kb_kt^k.
\]
Hence we get 
\begin{equation}\label{induction}
a_n = n\Big{(}-\frac{a_{n-1}}{n-1}b_1 -\frac{a_{n-2}}{n-2}b_2 -\cdots -a_1b_{n-1} + nb_n\Big{)}
\end{equation}
by comparing the coefficient of $t^n$. The statement ($\bullet$) is obtained by induction (using (\ref{induction})) and the recursion of $e(\alpha)$.
\end{proof}

\begin{remark}
Assume $a_n = n \sum_{d|n}d$ and $b_n=P_2(n)$. By the well-known formula
\[
\sum_{n\geq 0} P_2(k)t^k = \prod_{k\geq1}\Big{(}  \frac{1}{1-t^k}        \Big{)},
\] 
it is clear they satisfy the equation (\ref{relation}). Thus from Proposition \ref{prop2} we know that
\[
n \sum_{d|n}d = \sum_{\alpha}\prod_{i}P_2(i)^{\alpha_i}e(\alpha).
\]This is the combinatoric formula shown in \cite{Gul1} Section 2.3.

Similarly, by MacMahon's formula 
\[
\sum_{n\geq 0} P_3(k)t^k = \prod_{k\geq1}\Big{(}  \frac{1}{1-t^k} \Big{)}^k,
\] 
we know that $a_n = n\sum_{d|n}d^2$ and $b_n= P_3(n)$ satisfy (\ref{relation}). Hence we also have
\[
n \sum_{d|n}d^2 = \sum_{\alpha}\prod_{i}P_3(i)^{\alpha_i}e(\alpha).
\]
\end{remark}

\section{Step 2}
By Proposition \ref{prop1}, we may assume the $g$-dimensional abelian variety $A$ splits as $E\times B$ where $E$ is an elliptic curve and $B$ is a $(g-1)$-dimensional abelian variety. In this section we follow the idea in \cite{Gul1}.

Similar as in Section 1, we have the standard stratification of $\mathrm{Sym}^n(E)$,
\[
\mathrm{Sym}^n(E) = \coprod_{\alpha} E^{(n)}_{\alpha},
\]
and the addition map $g: \mathrm{Sym}^n(E) \rightarrow E$.
We denote by $P$ the preimage of $O_E$, \textit{i.e.} $P = g^{-1}(O_E)$. Moreover, we define $E^{(n)}_{\alpha, 0} = P \bigcap E^{(n)}_\alpha$.

\begin{lem}[{\cite{Gul1}} Section 5]\label{e}
We have \[
\chi(E^{(n)}_{\alpha, 0}) = e(\alpha).
\]
\end{lem}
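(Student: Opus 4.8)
The plan is to compute $\chi(E^{(n)}_{\alpha,0})$ directly from the geometry of the symmetric power of an elliptic curve, and then to verify that the resulting numbers satisfy exactly the recursion defining $e(\alpha)$. Fix a partition $\alpha=(1^{\alpha_1}2^{\alpha_2}\cdots n^{\alpha_n})$ of $n$. A point of $E^{(n)}_\alpha$ is an unordered collection of distinct points of $E$, $\alpha_i$ of them carrying multiplicity $i$; forgetting the labels, the stratum $E^{(n)}_\alpha$ is a quotient of an open subset of $\prod_i E^{\alpha_i}$ by the product of symmetric groups $\prod_i S_{\alpha_i}$, and the addition map $g$ records the weighted sum $\sum_i i\cdot(\text{sum of the }i\text{-multiplicity points})$. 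The first step is therefore to understand the fibers of this weighted-sum map over $O_E$.

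The key point is that the weighted addition map $E^{(n)}_\alpha \to E$ (before taking the $O_E$-fiber) is a fiber bundle in the analytic topology: it is equivariant for the translation action of $E$ on itself coming from translating any one chosen point, so all fibers are homeomorphic and $E^{(n)}_\alpha$ is (up to the finite quotient) a torus bundle over $E$. Hence $\chi(E^{(n)}_{\alpha}) = \chi(E)\cdot\chi(E^{(n)}_{\alpha,0}) = 0$, which gives no information directly; the honest computation has to produce $\chi(E^{(n)}_{\alpha,0})$ by a more refined count. I would instead argue as in \cite{Gul1}: stratify $P=g^{-1}(O_E)$ by the combinatorial type $\alpha$, use that $\chi(\mathrm{Sym}^n(E))=0$ together with the generating-function identity from Proposition \ref{prop2}, and pin down the individual $\chi(E^{(n)}_{\alpha,0})$ by downward induction on the refinement order of partitions. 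Concretely, one chooses one of the $n$ marked points as a ``pivot'', expresses its position as the negative of the weighted sum of the others, and observes that degenerating two of the remaining free points to coincide (passing to a coarser stratum $\alpha'$) changes the local count by the factor $-\tfrac{n}{n-i}$ that appears in the recursion; summing over which $i$-part absorbs the collision reproduces clause (2) of the definition of $e(\alpha)$, while the base case $\alpha=(n^1)$ gives a single $n$-torsion-type count equal to $n^2$, matching clause (1).

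The main obstacle is the bookkeeping in the inductive step: one must check that the local contribution of $E^{(n)}_{\alpha,0}$ near its boundary in $E^{(n)}_{\alpha',0}$ is exactly $-\tfrac{n}{n-i}$ times the contribution of the coarser stratum, with the right multiplicities for the $\binom{\alpha_i}{1}$ ways of merging points. This is precisely the combinatorial heart of \cite{Gul1} Section 5, and I would import it verbatim; the only thing to verify is that replacing the abelian surface there by an arbitrary elliptic curve changes nothing, since the argument uses only that $E$ is a one-dimensional compact complex torus and that $g$ is a translation-equivariant surjection with connected fibers. Once the numbers $\chi(E^{(n)}_{\alpha,0})$ are shown to obey both clauses of the recursion, uniqueness of the solution forces $\chi(E^{(n)}_{\alpha,0}) = e(\alpha)$, which is the claim.
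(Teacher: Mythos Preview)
Your overall strategy matches the paper's exactly: both defer to \cite{Gul1}, Section~5, and record that the argument there verifies that $\chi(E^{(n)}_{\alpha,0})$ obeys the same recursion that defines $e(\alpha)$.

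Two points in your sketch deserve correction, though. First, the recursion in clause~(2) relates a partition $\alpha$ of $n$ to the partitions of $n-i$ obtained by \emph{removing} one part of size $i$; it does not pass to a coarser partition of the same $n$. Your description---``degenerating two of the remaining free points to coincide, passing to a coarser stratum $\alpha'$''---is therefore not the mechanism behind the factor $-n/(n-i)$: the genuine inductive step compares degree-$n$ and degree-$(n-i)$ configurations on $E$ (using that the fibres of $\mathrm{Sym}^k(E)\to E$ are $\mathbb{P}^{k-1}$), not fine versus coarse strata inside $\mathrm{Sym}^n(E)$. Second, there is nothing to transport from abelian surfaces: in \cite{Gul1} the abelian surface is already split off an elliptic factor, and Section~5 there performs exactly this elliptic-curve computation, so the lemma here is quoted verbatim rather than adapted.
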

This lemma is proven by showing that $\chi(E^{(n)}_{\alpha , 0})$ satisfies the same recursion formula as $e(\alpha)$.

Now we consider the projection $h: \mathrm{Hilb}^{n}(E\times \mathbb{C}^m) \rightarrow \mathrm{Sym}^{n}(E)$ and define the subscheme
\[
W^n_m: = h^{-1}(P) \cap \mathrm{Hilb}^{n}(E\times \mathbb{C}^m)_{\{0\}}.
\]
\begin{lem}\label{w}
The Euler characteristics $\chi(W^n_m)$ satisfy the following equation
\[
\mathrm{exp}\Big{(} \sum_{n \geq 1} \frac{\chi{(W^n_m)}}{n^2} \cdot t^n  \Big{)} =  
 \sum_{k\geq 0} P_{m+1}(k)\cdot t^k.
 \]
\end{lem}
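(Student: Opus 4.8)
The plan is to stratify $W^n_m$ over the partition stratification of $\mathrm{Sym}^n(E)$, to reduce $\chi(W^n_m)$ to the numbers $\chi(E^{(n)}_{\alpha,0})$ (already identified with $e(\alpha)$ in Lemma \ref{e}) and the Euler characteristics of punctual Hilbert schemes of affine space, and finally to recognize the outcome as condition (1) of Proposition \ref{prop2}.

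First I would restrict the projection $h$ to the closed subscheme $\mathrm{Hilb}^n(E\times\mathbb{C}^m)_{\{0\}}$ and to $P=g^{-1}(O_E)$, giving
\[
W^n_m \;=\; \coprod_\alpha\; \Big(h^{-1}(E^{(n)}_{\alpha,0})\cap\mathrm{Hilb}^n(E\times\mathbb{C}^m)_{\{0\}}\Big).
\]
Over a point $\sum_i n_i[a_i]$ of $E^{(n)}_{\alpha,0}$ (with the $a_i$ pairwise distinct), a subscheme belonging to $W^n_m$ is supported on $E\times\{0\}$ and has image $\sum_i n_i[a_i]$ in $\mathrm{Sym}^n(E)$, so it is the disjoint union of subschemes of colength $n_i$ supported at the single point $(a_i,0)\in E\times\mathbb{C}^m$. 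Hence the fiber is $\prod_i\mathrm{Hilb}^{n_i}(E\times\mathbb{C}^m)_{\{(a_i,0)\}}$, which — since the punctual Hilbert scheme at a point is a local invariant of a smooth variety, cf. the Remark in Section 1 — is isomorphic to $\prod_i\mathrm{Hilb}^{n_i}(\mathbb{C}^{m+1})_{\{0\}}$ and in particular is independent of the $a_i$. Exactly as in the proof of Lemma \ref{euler}, over each stratum this is a fiber bundle in the analytic topology, so multiplicativity of the Euler characteristic and Lemma \ref{e} yield
\[
\chi(W^n_m)\;=\;\sum_\alpha e(\alpha)\prod_{i\geq 1} b_i^{\alpha_i},\qquad b_i:=\chi\big(\mathrm{Hilb}^i(\mathbb{C}^{m+1})_{\{0\}}\big),\quad b_0:=1,
\]
where $\alpha=(1^{\alpha_1}2^{\alpha_2}\cdots n^{\alpha_n})$ ranges over partitions of $n$.

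The second step is to evaluate $b_i$. The torus $T=(\mathbb{C}^*)^{m+1}$ acts on $\mathbb{C}^{m+1}$ by scaling the coordinates and hence on $\mathrm{Hilb}^i(\mathbb{C}^{m+1})$, preserving the projective subscheme $\mathrm{Hilb}^i(\mathbb{C}^{m+1})_{\{0\}}$ because $T$ fixes the origin. The $T$-fixed points are exactly the monomial ideals of colength $i$ in $\mathbb{C}[x_1,\dots,x_{m+1}]$, which are isolated and are in bijection with the $(m+1)$-dimensional partitions of $i$. Since the Euler characteristic of a complex variety with an algebraic $\mathbb{C}^*$-action equals that of its fixed locus, applying this iteratively for $T$ gives $b_i=P_{m+1}(i)$. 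With $a_n:=\chi(W^n_m)$ and $b_k:=P_{m+1}(k)$, the displayed relation for $\chi(W^n_m)$ is precisely condition (1) of Proposition \ref{prop2}, and condition (2) is then the asserted generating-function identity.

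I expect the main obstacle to be the first step: pinning down the fiber of $h$ over each stratum $E^{(n)}_{\alpha,0}$ — in particular the interaction between the support condition ``$\{0\}$'' and the constraint $h^{-1}(P)$ — and justifying that the stratumwise fibrations are locally trivial enough for the Euler characteristic to be multiplicative. The identity $\chi(\mathrm{Hilb}^i(\mathbb{C}^{m+1})_{\{0\}})=P_{m+1}(i)$ is classical and follows from torus localization, and the final appeal to Proposition \ref{prop2} is purely formal.
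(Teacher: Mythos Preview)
Your proposal is correct and follows the same route as the paper: stratify over $\mathrm{Sym}^n(E)$, identify the fibers as products of punctual Hilbert schemes of $\mathbb{C}^{m+1}$ (whose Euler characteristics are $P_{m+1}(i)$), invoke Lemma \ref{e} to replace $\chi(E^{(n)}_{\alpha,0})$ by $e(\alpha)$, and then apply Proposition \ref{prop2}. The paper compresses your first two steps into a citation of \cite{Gul1} Lemma 4.3 and treats the identity $\chi(\mathrm{Hilb}^i(\mathbb{C}^{m+1})_{\{0\}})=P_{m+1}(i)$ as known, but the substance is the same.
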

\begin{proof}
By the same argument as in \cite{Gul1} Lemma 4.3, we get 
\[
\chi(W^n_m)=\sum_{\alpha}\prod_{i}P_{m+1}(i)^{\alpha_i}\chi(E^{(n)}_{\alpha,0}).
\]
The lemma is then a consequence of Proposition \ref{prop2} and Lemma \ref{e}.
\end{proof}

We relate $\chi(W^n_m)$ to $\chi(K_n(X))$. Since $X = E\times B \times Y$, there is a projection
\[
p: K_n(X) \rightarrow \mathrm{Sym}^n(B\times Y).
\]
\begin{lem}\label{final}
\begin{enumerate}
\item[(1).]If a point $Q\in \mathrm{Sym}^n(B\times Y)$ is not of the form $n\cdot [b]$ for some point $b\in B\times Y$, then\[
\chi(p^{-1}(Q)) =0.
\] 
\item[(2).]The Euler characteristic of $K_n(X)$ is
\[
\chi(K_n(X)) = n^{2g-2}\chi(Y)\cdot \chi(W^n_{g+r-1}).
\]
\end{enumerate}
\end{lem}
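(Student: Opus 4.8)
The plan is to push the Euler characteristic forward along $p\colon K_n(X)\to \mathrm{Sym}^n(B\times Y)$ and show that only the ``fully concentrated'' classes $n[b]$ contribute. Throughout I view a point of $K_n(X)$ as a length-$n$ subscheme $\xi\subset E\times B\times Y$ whose image cycle in $\mathrm{Sym}^n(E\times B)$ has $E$-coordinate sum $O_E$ \emph{and} $B$-coordinate sum $O_B$; carrying the second constraint along is where the content lies.

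For part (1), write $Q=\sum_{i=1}^{l}m_i[q_i]$ with $q_i=(b_i,y_i)\in B\times Y$ pairwise distinct, so ``not of the form $n[b]$'' means $l\ge 2$. Any $\xi\in p^{-1}(Q)$ is supported over $\{q_1,\dots,q_l\}$ and decomposes canonically as $\xi=\bigsqcup_i\xi_i$, with $\xi_i$ of length $m_i$ supported on the fibre $E\times\{q_i\}$. Since every point of $\xi_i$ has $B$-coordinate $b_i$, the $B$-sum of $\xi$ is $\sum_i m_i b_i$, so $p^{-1}(Q)=\varnothing$ (and $\chi=0$) unless $\sum_i m_i b_i=O_B$. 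When this holds, using the standard local description (as in \eqref{fiber}) that a length-$m_i$ subscheme supported on $E\times\{q_i\}$ is the same as one in $E\times\mathbb{C}^{g+r-1}$ supported on $E\times\{0\}$, one presents $p^{-1}(Q)$ as the locus of tuples $(\xi_1,\dots,\xi_l)\in\prod_{i=1}^{l}\mathrm{Hilb}^{m_i}(E\times\mathbb{C}^{g+r-1})_{\{0\}}$ with $\sum_i s_i(\xi_i)=O_E$, where $s_i$ is the $E$-sum map. Each $s_i$ is equivariant for the translation action of $E$ along the isogeny $[m_i]\colon E\to E$ (translation by $e$ adds $m_ie$ to $s_i$), so $s_i$ is a locally trivial fibre bundle over $E$, and hence so is the product map to $E^{l}$; restricting that bundle over the kernel $K\cong E^{l-1}$ of the addition map $E^{l}\to E$ exhibits $p^{-1}(Q)$ as a fibre bundle over $K$. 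Since $l-1\ge 1$ we get $\chi(p^{-1}(Q))=\chi(E^{l-1})\cdot\chi(\text{fibre})=0$.

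For part (2), stratify $\mathrm{Sym}^n(B\times Y)$ into the small diagonal $\Delta\cong B\times Y$ of classes $n[b]$ and its complement $U$. By part (1) and additivity of the Euler characteristic over the constructible family $p^{-1}(U)\to U$, one has $\chi(p^{-1}(U))=0$, so $\chi(K_n(X))=\chi(p^{-1}(\Delta))$. For $b\in B$ and $y\in Y$, a subscheme $\xi\in p^{-1}(n[(b,y)])$ is supported on the single fibre $E\times\{(b,y)\}$, so its $B$-sum is $nb$; hence $p^{-1}(n[(b,y)])=\varnothing$ unless $b\in B[n]$, and for such $b$ the only remaining condition is that the $E$-sum be $O_E$, which — after the same local identification $X\cong E\times\mathbb{C}^{g+r-1}$ near $E\times\{(b,y)\}$ — gives $p^{-1}(n[(b,y)])\cong W^n_{g+r-1}$. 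Thus $p^{-1}(\Delta)$ lies over $B[n]\times Y$ with all fibres isomorphic to $W^n_{g+r-1}$; stratifying $Y$ so that this family becomes locally trivial and applying additivity once more yields $\chi(K_n(X))=|B[n]|\cdot\chi(Y)\cdot\chi(W^n_{g+r-1})=n^{2g-2}\chi(Y)\cdot\chi(W^n_{g+r-1})$, using $|B[n]|=n^{2(g-1)}$.

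The step I expect to be the crux is the bookkeeping of the $B$-sum constraint in part (2). It is this constraint that empties the fibres over most non-concentrated classes and, more importantly, that shrinks the contributing part of the diagonal from $B\times Y$ to $B[n]\times Y$, thereby producing the factor $n^{2g-2}=|B[n]|$ — without it the answer would collapse to $0$ for $g\ge2$. The remaining ingredients — that each $s_i$ is a fibre bundle by isogeny-equivariance, and that Euler characteristics multiply along the constructible families involved — are routine and only need to be stated with care.
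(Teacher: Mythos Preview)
Your argument is correct and matches the paper's approach. For (1) the paper simply invokes \cite{Gul1} Lemma~4.2, and your explicit argument---exhibiting $p^{-1}(Q)$ as a fibre bundle over $E^{l-1}$ via the translation $E$-action---is precisely the mechanism behind that lemma; for (2) your reasoning coincides with the paper's: restrict to the small diagonal, use the $B$-sum constraint to cut the contributing locus down to $B[n]\times Y$, identify each fibre with $W^n_{g+r-1}$, and read off the factor $n^{2g-2}=|B[n]|$.
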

\begin{proof}
The proof of \cite{Gul1} Lemma 4.2 works for (1). Now we prove (2). We know from (1) that only 
\[
p^{-1}\Big{\{}n\cdot [t\times y] \in \mathrm{Sym}^{n}(B\times Y)     \,    \Big{|}  \, t \textup{ is an $n$-torsion point on $B$}, \,y\in Y          \Big{\}}
\] contributes to the Euler characteristic $\chi(K_n(X))$. Since the locus
\[\Big{\{}n\cdot [t\times y] \in \mathrm{Sym}^{n}(B\times Y)    \,     \Big{|}  \, t \textup{ is an $n$-torsion point on } B,\, y\in Y          \Big{\}}    \subset \mathrm{Sym}^{n}(B\times Y)     \]
is isomorphic to $n^{2g-2}$ copies of $Y$, and the fiber of $p$ over each point $n\cdot [t\times y] \in \mathrm{Sym}^{n}(B\times Y)$ is isomorphic to $W^n_{g+r-1}$, we get the formula
\[
\chi(K_n(X)) = n^{2g-2}\chi(Y)\cdot \chi(W^n_{g+r-1}).\qedhere
\]
\end{proof}

The theorem is deduced by Lemma \ref{w} and Lemma \ref{final} (2).

\begin{remark}
One can also consider the natural $A$-action on the total Hilbert scheme $\mathrm{Hilb^n(X)}$ induced by the $A$-translate on $X$. It is easy to see that the quotient stack $K^{\textup{Quot}}_n(X)$ is exactly the global quotient of $K_n(X)$ by the finite group $A[n]$( $n$-torsion points on $A$), \textit{i.e.}
\[
K_n^\textup{Quot}(X) \cong \big{[} K_n(X)/ A[n] \big{]}.
\]
Hence the invariants $\chi(K_n(X))/n^{2g}$ in the formula of the main theorem can be viewed as Euler characteristics of the stacks $K_n^{\textup{Quot}}(X)$.
\end{remark}

\nocite{*} 
\bibliographystyle{plain}
\bibliography{Meinbib}
\end{document}